\theoremstyle{plain}
\newtheorem*{theorem*}{Theorem}
\newtheorem{theorem}{Theorem}[section]
\newtheorem{lemma}[theorem]{Lemma}
\newcommand{\be}{\begin{equation}}
\newcommand{\ee}{\end{equation}}
\newcommand{\lt}{\left}
\newcommand{\rt}{\right}
\newcommand{\goto}{\rightarrow}
\newcommand{\R}{\mathbb{R}}
\newcommand{\kp}{\kappa}
\newcommand{\vp}{\varphi}
\newcommand{\tC}{\tilde{C}}
\newcommand{\n}{\mathbf{n}}
\theoremstyle{definition}
\newtheorem{remark}[theorem]{Remark}
\numberwithin{equation}{section}
\numberwithin{equation}{section}
\begin{document}
\setlength{\baselineskip}{1.2\baselineskip}

\title[Neumann boundary value problem for gernera curvature flow]
{Neumann boundary value problem for gernera curvature flow with forcing term}

\author{Ling Xiao}
\address{Department of Mathematics, Rutgers University,
Piscataway, NJ 08854}
\email{lx70@math.rutgers.edu}

\begin{abstract}
In this paper, we prove long time existence and convergence results for a class of general curvature flows with Neumann boundary condition. 
This is the first result for the Neumann boundary problem of non Monge-Ampere type curvature equations. Our method also works for 
the corresponding elliptic setting.
\end{abstract}

\maketitle

\section{Introduction}
\label{int}
\setcounter{equation}{0}

This paper, we  consider the deformation of convex graphs over bounded, convex domains 
$\Omega\subset\R^n, n\geq 2,$ to convex graphs with prescribed general curvature and Neumann boundary condition.
More precisely, let  $\Sigma(t)=\{X:=(x, u(x, t))| (x, t)\in\Omega\times[0, T)\},$ we study the long time existence and convergence of the following flow problem
\be\label{int.0}
\lt\{\begin{aligned}
\dot{u}&=w\lt(f(\kp[\Sigma(t)])-\Phi(x, u)\rt)\,\,&\mbox{in $\Omega\times[0, T)$}\\
u_\nu&=\vp(x, u)\,\,&\mbox{on $\partial\Omega\times[0, T)$}\\
u|_{t=0}&=u_0\,\,&\mbox{in $\Omega$},
\end{aligned}
\right.
\ee
where $\Phi, \vp: \bar{\Omega}\times\R\goto\R$ are smooth functions, $\nu$ denotes the outer unit normal to $\partial\Omega,$
and $u_0: \bar{\Omega}\goto\R,$ is the initial value.
The flow equation in \eqref{int.0} is equivalent to say $X$ satisfies 
\[\dot{X}=(f(\kp[\Sigma(t)])-\Phi)\n,\]
where $\n$ is the upward unit normal of $\Sigma(t).$

We are goint to focus on the locally convex hypersurfaces. Accordingly, the function $f$ is assumed to be defined in the convex cone
$\Gamma_n^+\equiv\{\lambda\in\R^n: \mbox{each component $\lambda_i>0$} \}$ in $\R^n$ and satisfying the fundamental structure conditions:
\be\label{int.2}
f_i(\lambda)\equiv\frac{\partial f(\lambda)}{\partial\lambda_i}>0\,\,\mbox{in $\Gamma_n^+, 1\leq i\leq n,$}
\ee
and
\be\label{int.3}
\mbox{$f$ is a concave function.}
\ee
In addition, $f$ will be assumed to satisfy some more technical assumptions. These include
\be\label{int.4}
f>0\,\,\mbox{in $\Gamma_n^+, f=0$ on $\partial\Gamma_n^+,$  }
\ee
\be\label{int.5}
f(1, \cdots, 1)=1,
\ee
and 
\be\label{int.6}
\mbox{$f$ is homogeneous of degree one.}
\ee
Moreover, for any $C>0$ and every compact set $E\subset\Gamma^+_n,$ there is
$R=R(E, C)>0$ such that 
\be\label{int.7}
f(\lambda_1, \cdots, \lambda_{n-1}, \lambda_n+R)\geq C, \forall \lambda\in E.
\ee

An example of functions satisfies all assumptions above is given by $f=\frac{1}{2}\lt[H_n^\frac{1}{n}+(H_n/H_l)^{\frac{1}{n-l}}\rt],$
where $H_l$ is the normalized $l$-th elementary symmetric polynomial. However, we point out that the pure curvature quotient
$(H_n/H_l)^{\frac{1}{n-l}}$ does not satisfy \eqref{int.7}.

Since for a graph of $u,$ the induced metric and its inverse matrix are given by 
\be\label{ce2.12}
g_{ij}=\delta_{ij}+u_iu_j\,\,\mbox{and $g^{ij}=\delta_{ij}-\frac{u_iu_j}{w^2},$}
\ee
where $w=\sqrt{1+|Du|^2}.$
Following \cite{CNS4}, the principle curvature of graph $u$ are eigenvalues of the symmetric matrix $A[u]=[a_{ij}]:$
\be\label{ce2.13}
a_{ij}=\frac{\gamma^{ik}u_{kl}\gamma^{lj}}{w},\,\,\mbox{where $\gamma^{ik}=\delta_{ij}-\frac{u_iu_k}{w(1+w)}.$}
\ee
The inverse of $\gamma^{ij}$ is denoted by $\gamma_{ij},$ and
\be\label{ce2.14}
\gamma_{ij}=\delta_{ij}+\frac{u_iu_k}{1+w}.
\ee
Geometrically $[\gamma_{ij}]$ is the square root of the metric, i.e. $\gamma_{ik}\gamma_{kj}=g_{ij}.$
Now, for any positive definite symmetric matrix $A,$ we define the function $F$ by
\[F(A)=f(\lambda(A)),\]
where $\lambda(A)$ denotes the eigenvalues of $A.$
We will use the notation 
\[F^{ij}(A)=\frac{\partial F}{\partial a_{ij}},\,\,F^{ij, kl}=\frac{\partial^2 F}{\partial a_{ij}\partial a_{kl}}(A).\]
The matrix $[F^{ij}(A)]$ is symmetric and has eigencalues $f_1, \cdots, f_n,$ and by \eqref{int.2},
$[F^{ij}(A)]$ is positive definite. Moreover, by \eqref{int.3}, $F$ is a concave function of $A$, that is 
\[F^{ij, kl}(A)\xi_{ij}\xi_{kl}\leq 0,\]
for any $n\times n$ matrix $[\xi_{ij}].$
 
We rewrite equation \eqref{int.0} as following
 \be\label{int.1}
\lt\{\begin{aligned}
\dot{u}&=w\lt(F\lt(\frac{\gamma^{ik}u_{kl}\gamma^{lj}}{w}\rt)-\Phi(x, u)\rt)\,\,&\mbox{in $\Omega\times[0, T)$}\\
u_\nu&=\vp(x, u)\,\,&\mbox{on $\partial\Omega\times[0, T)$}\\
u|_{t=0}&=u_0\,\,&\mbox{in $\Omega$},
\end{aligned}
\right.
\ee
 
We will prove
\begin{theorem}
\label{inth.1}
Let $\Omega$ be a smooth bounded, strictly convex domain in $\R^n.$ Let $\Phi, \vp: \bar{\Omega}\times\R\goto\R,$ be smooth functions satisfy
\be\label{int.8}
\Phi>0 \,\,\mbox{and $\Phi_z\geq 0$,}
\ee 
\be\label{int.9}
\vp_z\leq c_\vp<0.
\ee
Let $u_0$ be a smooth, convex function that satisfies the compatibility condition on $\partial\Omega$:
\be\label{int.10}
\lt.\nu^iu_i-\vp(x, u)\rt\vert_ {t=0}=0.
\ee
Moreover, we assume 
\be\label{int.11}
f(\kp[\Sigma_0])-\Phi(x, u_0)\geq 0,
\ee
where $\Sigma_0=\{(x, u_0(x))|x\in\Omega\}.$
Then there exists a solution $u\in C^\infty(\bar{\Omega}\times(0, t))\cap C^{\alpha+2, 1+\alpha/2}(\bar{\Omega}\times[0, t))$ of equation \eqref{int.1} for all $t>0.$ As $t\goto\infty,$ the function $u(x, t)$ smoothly converges to a smooth limit function 
$u^\infty,$ such that $u^\infty$ satisfies the Neumann boundary value problem
\be\label{int.12}
\lt\{\begin{aligned}
F\lt(\frac{\gamma^{ik}u^\infty_{kl}\gamma^{lj}}{w}\rt)&=\Phi(x, u^\infty)\,\,&\mbox{in $\Omega$}\\
u^\infty_\nu&=\vp(x, u^\infty)\,\,&\mbox{on $\partial\Omega$,}\\
\end{aligned}
\rt.
\ee
where $\nu$ is the outer unit normal of $\partial\Omega.$
\end{theorem}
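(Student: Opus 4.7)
The plan is to prove short-time existence by standard linear parabolic theory for oblique boundary problems, extend the solution globally in time by establishing a priori $C^{2+\alpha,1+\alpha/2}(\bar\Omega\times[0,T])$ estimates independent of $T$, and then extract convergence as $t\to\infty$ from monotonicity combined with the uniform estimates. Since $u_0$ is convex and the flow preserves convexity (the curvature matrix stays in $\Gamma_n^+$), spatial maxima of $u(\cdot,t)$ lie on $\partial\Omega$; at such a boundary maximum $x_0$ one has $u_\nu(x_0,t)\ge 0$, so $\varphi(x_0,u(x_0,t))\ge 0$, which together with $\varphi_z\le c_\varphi<0$ gives an upper bound on $u$ depending only on $\varphi$ and $\Omega$. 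The lower bound comes from the monotonicity $\dot u\ge 0$ established below, which yields $u\ge\min u_0$.

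For the $C^1$ estimate, tangential derivatives on $\partial\Omega$ are controlled since $u$ is bounded and $\varphi$ is smooth, the normal derivative on $\partial\Omega$ equals $\varphi(x,u)$ and hence is bounded, and interior gradient bounds follow from a maximum principle argument on an auxiliary function such as $\log w+\lambda u$, using the ellipticity and concavity of $F$. Interior $C^2$ bounds on the principal curvatures then come from differentiating the evolution equation twice and applying the standard Caffarelli-Nirenberg-Spruck/Andrews type argument, using concavity \eqref{int.3}, homogeneity \eqref{int.6}, and the structure condition \eqref{int.7}.

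The main obstacle is the boundary $C^2$ estimate, where one must control pure tangential, mixed tangential-normal, and pure normal second derivatives on $\partial\Omega$. The pure tangential $u_{\tau\tau}$ follow from differentiating the Neumann condition twice along the boundary. The mixed $u_{\tau\nu}$ bounds are obtained through a Lions-Trudinger-Urbas style auxiliary function built from the oblique operator, in which the strict convexity of $\Omega$ provides the essential good term coming from the second fundamental form of $\partial\Omega$. Finally, the pure normal $u_{\nu\nu}$ on $\partial\Omega$ is recovered from the flow equation: once $n-1$ eigenvalues of $A[u]$ are bounded on $\partial\Omega$, condition \eqref{int.7} combined with the identity $F(A[u])=\Phi+\dot u/w$ forces the remaining eigenvalue to be bounded.

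With uniform $C^2$ bounds in hand, concavity of $F$ and Evans-Krylov type theory for oblique parabolic problems yield uniform $C^{2+\alpha,1+\alpha/2}$ estimates; Schauder bootstrapping upgrades this to smooth bounds, and continuation gives existence for all $t>0$. For convergence as $t\to\infty$, differentiating the flow shows $\dot u$ satisfies a linear parabolic equation with oblique boundary condition $(\dot u)_\nu=\varphi_z\dot u$; since $\varphi_z\le c_\varphi<0$ and $\dot u|_{t=0}\ge 0$ by \eqref{int.11}, the maximum principle and Hopf lemma yield $\dot u\ge 0$ for all $t$. Hence $u(\cdot,t)$ is monotone increasing in $t$; being uniformly bounded above, it converges pointwise to some $u^\infty$, and the smooth a priori estimates let me pass to the limit along any sequence $t_k\to\infty$ to obtain a smooth limit satisfying $\dot u=0$, i.e.\ the stationary Neumann problem \eqref{int.12}.
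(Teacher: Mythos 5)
The overall plan — short-time existence, $T$-independent a priori estimates, continuation, and convergence from monotonicity of $u(\cdot,t)$ — matches the paper's strategy, and the $C^0$, $C^1$, interior $C^2$, and tangential/mixed boundary estimates are handled in essentially the same spirit (the paper's interior $C^1$ bound uses convexity plus a geometric barrier rather than a test function on $\log w+\lambda u$, but that is a cosmetic difference).

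The double normal boundary estimate, however, has a genuine gap. You propose to bound $u_{\nu\nu}$ on $\partial\Omega$ by arguing that the $n-1$ tangential/mixed eigenvalues of $A[u]$ are already bounded, so condition \eqref{int.7} together with $F(A[u])=\Phi+\dot u/w$ (bounded) should force the remaining eigenvalue to be bounded. But \eqref{int.7} only yields $f(\lambda_1,\dots,\lambda_{n-1},\lambda_n+R)\ge C$ for $\lambda$ in a \emph{compact} subset $E\subset\Gamma_n^+$, which requires $\lambda_1,\dots,\lambda_{n-1}$ to be bounded both above \emph{and uniformly away from zero}. The tangential and mixed boundary estimates only give an upper bound on those curvatures; they do not prevent them from degenerating toward $\partial\Gamma_n^+$. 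Indeed, for the model operator $f=\tfrac12\bigl[H_n^{1/n}+(H_n/H_l)^{1/(n-l)}\bigr]$, one can send $\lambda_1=\dots=\lambda_{n-1}=\varepsilon\to 0$ and $\lambda_n=R\to\infty$ along $\varepsilon=R^{-1/(n-1)}$ with $f$ staying bounded (near $\tfrac12$), so the conclusion you want genuinely fails without a positive lower bound on the small eigenvalues. Moreover, the only mechanism in the paper for getting such a lower bound (compactness of $\{\lambda:\lambda_i\le C,\ f(\lambda)\ge c\}$ using $f=0$ on $\partial\Gamma_n^+$) presupposes the full upper bound on all eigenvalues — exactly what you are trying to prove — so the reasoning as stated is circular. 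The paper avoids this by proving the double normal estimate directly with the Lions--Trudinger--Urbas type barrier $P(x,t)=Du\cdot Dq-\varphi-Q$ on a collar $\Omega_\mu$ (Lemma 4.4 and Theorem 4.5), where $q=-d+Nd^2$ and the good term comes from the concavity inequality $G^{ij}q_{ij}\ge G(D^2q,Du)$ together with a suitable choice of $N$ and $A$; you need this (or an equivalent) construction to close the argument.
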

\bigskip

\begin{remark}\label{inrk.1}
The short time existence for equation \eqref{int.1} comes from Theorem 5.3 in \cite{LSU} and the implicit function theorem. 
\end{remark}

By applying short time existence theorem,  we know that the flow exists for $t\in [0, T^*),$ for some $T^*>0$ very small. In the following sections, we fix
   $T<T^*,$ and establish the uniform 
$C^2$ bounds for the solution $u$ of \eqref{int.1} in $(0, T].$ Since our estimates are independent of $T,$ repeating this process we obtain the longtime existence of equation \eqref{int.1}.

Neumann boundary problem has attracted lots of attetions through these years. In particular, the existence for equations of Monge-Ampere type was studied
in \cite{LTU} in the 80s'; later Jiang, Trudinger, and Xiang \cite{JTX} addapted and developed the methods in \cite{LTU} to a generalized Monge-Ampere type equation with Neumann boundary condition. Recently, Ma and Qiu proved the existence of solutions to $\sigma_k$ Hession equations with Neumann boundary condition in their beautiful paper \cite{MQ}, in this paper they solved a long lasting conjecture by Trudinger in 1986.   The Neumann boundary problems for  parabolic equation have been wildly studied too. For example,  mean curvature flow with Neumann boundary condition have been studied in \cite{ AW, BG, AS};  Guass curvature flow with Neumann boundary condition have been studied in \cite{SS}. 

Our paper is oganized as follows: In Section \ref{se} we prove the uniform estimate for $\dot{u},$ which also implies the convexity for $u(\cdot, t), t\in [0, T].$ This is used in Section \ref{ge} to derive the $C^0$ and $C^1$ estimates. Section \ref{ce} is the most important section, in which we derive the $C^2$ estimates for $u$. Finally, in Section \ref{cvg} we combine all results above to prove the convergence of solution of \eqref{int.1} as $t\goto\infty$.

\section{Speed estimate}
\label{se}
\setcounter{equation}{0}
\begin{lemma}
\label{selm.1}
As long as a smooth convex solution of \eqref{int.1} exists, we have
\be\label{se.1}
\min\{\min\limits_{t=0}\dot{u}, 0\}\leq\dot{u}\leq\max\{\max\limits_{t=0}\dot{u}, 0\}.
\ee
\end{lemma}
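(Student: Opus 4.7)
The plan is to derive a linear parabolic equation for $v:=\dot{u}$ by differentiating the flow equation in $t$, obtain a Robin-type boundary condition by differentiating $u_\nu=\vp(x,u)$ in $t$, and then apply the weak parabolic maximum principle together with Hopf's lemma. The two sign hypotheses \eqref{int.8} and \eqref{int.9} are exactly what is needed for this scheme to close.

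Differentiating $\dot{u}=w(F(A[u])-\Phi(x,u))$ in $t$ and using $\dot{w}=u_k v_k/w$, $\dot{\Phi}=\Phi_z v$, together with the observation that in
\begin{equation*}
\dot{a}_{ij}=\partial_t\!\left(\frac{\gamma^{ik}u_{kl}\gamma^{lj}}{w}\right)
\end{equation*}
the only second-order contribution in $v$ is $\gamma^{ik}v_{kl}\gamma^{lj}/w$ (the remaining terms being linear in $v$ and $Dv$, since $\gamma^{ij}$ depends only on $Du$), I expect to arrive at an equation of the form
\begin{equation*}
v_t = F^{ij}\gamma^{ik}\gamma^{jl}\,v_{kl} + b^k v_k - w\Phi_z\,v \quad\text{in }\Omega\times(0,T],
\end{equation*}
with $b^k$ bounded on the fixed time slab. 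The principal coefficient matrix is positive definite by \eqref{int.2}, and the zeroth-order coefficient $c:=w\Phi_z$ is non-negative by \eqref{int.8}. Differentiating the Neumann condition in $t$ produces
\begin{equation*}
v_\nu=\vp_z\,v \quad\text{on }\partial\Omega\times(0,T],
\end{equation*}
whose coefficient satisfies $\vp_z\le c_\vp<0$ by \eqref{int.9}.

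Granted this structure, the upper bound in \eqref{se.1} follows by assuming, for contradiction, that $M:=\max_{\bar{\Omega}\times[0,T]}v$ is strictly positive and attained at some $(x_0,t_0)$ with $t_0>0$. The hypothesis $c\geq 0$, combined with the standard weak maximum principle (applied, if needed, to the perturbation $v-\e t$ with $\e\downarrow 0$), forbids $(x_0,t_0)$ from being an interior point, so $x_0\in\partial\Omega$. Hopf's lemma then forces $v_\nu(x_0,t_0)>0$, contradicting $v_\nu(x_0,t_0)=\vp_z(x_0,u(x_0,t_0))\cdot M<0$. The lower bound in \eqref{se.1} follows symmetrically by applying the same argument to $-v$: an interior negative minimum is ruled out by $c\geq 0$, and a boundary negative minimum at $x_0$ would give $v_\nu(x_0,t_0)=\vp_z v<0$ by the boundary condition while Hopf requires $v_\nu\leq 0$ at a boundary minimum in the strict sense.

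The main obstacle is the linearization itself: one must carefully check that after every implicit $u$-dependence in $w$, $\gamma^{ij}$, $F$, and $\Phi$ is differentiated in $t$, the highest-order contributions do assemble into a uniformly parabolic operator acting on $v$ and that the zeroth-order coefficient emerges with the sign $+w\Phi_z\geq 0$. No quantitative control on the coefficients is needed — only their structure — and uniform ellipticity on the fixed slab $[0,T]$ is automatic from the positivity of $[F^{ij}]$ on strictly convex graphs. Once the linearized equation and its boundary condition are in hand, the conclusion is a direct application of the maximum principle and Hopf lemma, and the bounds transfer from the initial slice to the entire parabolic cylinder.
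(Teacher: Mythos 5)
Your approach is correct in substance but takes a slightly different route from the paper. You linearize the flow equation for $v:=\dot u$ directly and invoke the weak maximum principle with a nonnegative zeroth-order coefficient $c=w\Phi_z\ge 0$, plus the Hopf lemma with the Robin boundary condition $v_\nu=\vp_z v$, treating the max and min separately. The paper instead sets $r=(\dot u)^2$ and observes that $\dot r=\tilde G^{ij}r_{ij}-2\tilde G^{ij}\dot u_i\dot u_j+\tilde G^s r_s+2\tilde G_u r$, where the zeroth-order term $2\tilde G_u r=-2w\Phi_z r\le 0$ because $r\ge 0$; this lets the paper use the maximum principle for an operator with \emph{no} zeroth-order term, and the boundary is dispatched in one stroke: at a positive boundary maximum of $r$ one has $r_\nu=2\dot u\,\dot u_\nu=2\vp_z(\dot u)^2<0$, contradicting $r_\nu\ge 0$, regardless of the sign of $\dot u$. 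Squaring thus unifies your two boundary cases and sidesteps the $c\ge 0$ hypothesis in the maximum principle. Both proofs use exactly the same two sign conditions \eqref{int.8} and \eqref{int.9}.

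One concrete slip in your lower-bound argument: at a strict negative boundary minimum the Robin condition gives $v_\nu=\vp_z v>0$ (a product of two negatives), not $v_\nu<0$ as you wrote, while Hopf's lemma forces $v_\nu<0$ at such a minimum. The contradiction is real, but your stated signs are reversed, and as written (``$v_\nu<0$'' versus ``Hopf requires $v_\nu\le 0$'') they are not even inconsistent with each other. The paper's squared quantity avoids this case analysis entirely and is less error-prone here.
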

\begin{proof}
If $(\dot{u})^2$ achieves a positive local maximum at $(x, t)\in\partial\Omega\times[0, T]$ then at this point we would have
\be\label{se.2}
(\dot{u})^2_\nu=2\dot{u}\dot{u}_\nu=2(\dot{u})^2\vp_z<0,
\ee
which leads to a contradiction. Thus, we assume $(\dot{u})^2$ achieves maximum at an interior point.
Now let's denote
\[\tilde{G}(D^2u, Du, u)=wF\lt(\frac{\gamma^{ik}u_{kl}\gamma^{lj}}{w}\rt)-w\Phi(x, u)\]
and $r=(\dot{u})^2.$ Then, a straight forward calculation gives us
\be\label{se.3}
\dot{r}=\tilde{G}^{ij}r_{ij}-2\tilde{G}^{ij}\dot{u}_i\dot{u}_j+\tilde{G}^sr_s+2\tilde{G}_ur.
\ee
Since 
\be\label{se.4}
\tilde{G}_u:=\frac{\partial \tilde{G}}{\partial u}=-w\Phi_u\leq 0,
\ee
we have
\be\label{se.5}
\dot{r}-\tilde{G}^{ij}r_{ij}-\tilde{G}^sr_s\leq 0.
\ee
By the maximum principle we know that a positive local maximum of $(\dot{u})^2$ can not occur at an interior point of $\Omega\times(0, T].$
Therefore, we proved this Lemma.
\end{proof}

\begin{lemma}
\label{selm.2}
A solution of \eqref{int.1} satisfies $\dot{u}>0$ for $t>0$ if $0\not\equiv \dot{u}\geq 0$ for $t=0.$
\end{lemma}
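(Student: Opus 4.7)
The plan is to differentiate the equation in $t$ and apply the parabolic strong maximum principle together with the Hopf boundary point lemma to $v := \dot{u}$. Note that Lemma~\ref{selm.1} already gives $v \geq 0$ on $\bar{\Omega}\times[0,T]$ under the hypothesis $v|_{t=0} \geq 0$, since then $\min\{\min_{t=0} v, 0\} = 0$; so it remains only to upgrade nonnegativity to strict positivity for $t > 0$.

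Using the notation $\tilde{G}$ from the proof of Lemma~\ref{selm.1}, differentiating the flow equation in $t$ yields the linear parabolic equation
\begin{equation*}
\partial_t v - \tilde{G}^{ij} v_{ij} - \tilde{G}^s v_s - \tilde{G}_u v = 0 \quad \text{in } \Omega \times (0, T],
\end{equation*}
and differentiating the boundary relation $u_\nu = \vp(x, u)$ gives
\begin{equation*}
v_\nu - \vp_z\, v = 0 \quad \text{on } \partial\Omega \times (0, T].
\end{equation*}
Since a smooth convex solution exists on $[0,T]$, the matrix $[\tilde{G}^{ij}]$ is uniformly positive definite along this solution; moreover $-\tilde{G}_u = w\Phi_z \geq 0$ by \eqref{int.8} and $\vp_z < 0$ by \eqref{int.9}, which are precisely the sign conditions needed for the interior strong maximum principle and the Hopf lemma to apply to the nonnegative function $v$.

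Suppose toward a contradiction that $v(x_0, t_0) = 0$ for some $t_0 \in (0, T]$; since $v \geq 0$, such a point is a global minimum. If $x_0 \in \Omega$, the interior strong maximum principle forces $v \equiv 0$ on $\bar{\Omega}\times[0, t_0]$, contradicting $v|_{t=0} \not\equiv 0$. If $x_0 \in \partial\Omega$, the first case ensures $v$ is not identically zero on $\Omega \times [0, t_0]$, so the parabolic Hopf boundary lemma yields $v_\nu(x_0, t_0) < 0$; but the linearized boundary equation forces $v_\nu(x_0, t_0) = \vp_z(x_0, u(x_0,t_0)) \cdot 0 = 0$, a contradiction. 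Hence $v > 0$ for all $t > 0$. The only technical subtlety is confirming the hypotheses of the Hopf lemma in the parabolic oblique setting---the smoothness of $\partial\Omega$ supplies the interior sphere condition, and the sign hypotheses \eqref{int.8}--\eqref{int.9} take care of the rest.
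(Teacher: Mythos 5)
Your proof is correct and follows essentially the same route as the paper: differentiate the equation and the boundary condition in $t$ to obtain a linear uniformly parabolic equation with oblique boundary condition for $v=\dot u$, use Lemma~\ref{selm.1} to get $v\geq 0$, then rule out an interior zero via the strong maximum principle and a boundary zero via the Hopf lemma. The one cosmetic difference is that the paper multiplies by $e^{\lambda t}$ with $\lambda+\tilde G_u>0$ to turn $v$ into a supersolution of an operator with no zeroth-order term before invoking the strong maximum principle, whereas you instead observe directly that the zeroth-order coefficient $\tilde G_u=-w\Phi_z\leq 0$ already has the admissible sign (and the minimum value is zero), so the strong minimum principle applies without the exponential device; both are standard and equivalent here.
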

\begin{proof}
Since 
\be\label{se.6}
\dot{u}=\tilde{G}(D^2u, Du, u),
\ee
differentiating it with respect to $t$  we get
\be\label{se.7}
\frac{d}{dt}u_t=\tilde{G}^{ij}(u_t)_{ij}+\tilde{G}^s(u_t)_s+\tilde{G}_uu_t.
\ee
Therefore, for any constant $\lambda$ we have
\be\label{se.8}
\frac{d}{dt}(u_te^{\lambda t})=\tilde{G}^{ij}(u_te^{\lambda t})_{ij}+\tilde{G}^s(u_te^{\lambda t})_s+\tilde{G}_u(u_te^{\lambda t})+\lambda u_te^{\lambda t}.
\ee
We fix $t_0>0$ and a constant $\lambda$ such that $\lambda+\tilde{G}_u>0$ for $(x, t)\in\bar{\Omega}\times[0, t_0].$
By the strong maximum principle we see that $u_te^{\lambda t}$ has to vanish identically if it vanishes in $\Omega\times(0, t_0),$ which leads to a contradiction.

If $u_te^{\lambda t}=0$ for $(x, t)\in\partial\Omega\times (0, t_0),$ then we would have
\be\label{se.9}
(u_te^{\lambda t})_\nu=\vp_z(u_te^{\lambda t})=0
\ee
contradicts the Hopf Lemma.
\end{proof}
\begin{remark}
\label{sermk.1}
Lemma \ref{selm.2} impies that, if we start from a strictly convex surface $\Sigma_0$ satisfies \eqref{int.11}, then as long as the flow exists,
the flow surfaces $\Sigma(t)$ are strictly convex and satisfies 
$f(\kp[\Sigma(t)])-\Phi(x, u)>0.$
\end{remark}

\bigskip
\section{$C^0$ and $C^1$ estimates}
\label{ge}
\setcounter{equation}{0}
The strict convexity of $u$ and the fact  that $\vp(\cdot, z)\goto -\infty$ uniformly as $z\goto\infty$ implies that $u$ is uniformly bounded from above.
By Lemma \ref{selm.2}
\be\label{ge.1}
u(x, t)=u(x, 0)+\int_0^t\dot{u}(x, \tau)d\tau\geq u(x, 0)
\ee
 we know $u$ is bounded from below as well. To conclude, we have
\begin{theorem}[$C^0$ estimates]
Under our assumption \eqref{int.11} on $u_0,$ a solution of equation \eqref{int.1} satisfies
\be\label{ge.2}
|u|\leq C_0,
\ee
where $C_0=C_0(u_0, \vp).$
\end{theorem}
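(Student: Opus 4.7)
The strategy is to handle the upper and lower bounds separately: the upper bound follows from convexity combined with the sign condition \eqref{int.9}, while the lower bound follows from the speed monotonicity of Lemma~\ref{selm.2}. For the upper bound, Remark~\ref{sermk.1} tells us that each slice $u(\cdot,t)$ is convex on the bounded convex domain $\Omega$, so the spatial maximum of $u(\cdot,t)$ is attained at some point $x_t\in\partial\Omega$. At $x_t$ the outer normal derivative satisfies $u_\nu(x_t,t)\geq 0$, so the Neumann condition gives $\vp(x_t,u(x_t,t))\geq 0$. Integrating \eqref{int.9} in $z$ yields
\[
\vp(x,z)\le \max_{\bar\Omega}\vp(\cdot,0)+c_\vp z\qquad\text{for }z\ge 0,
\]
and since $c_\vp<0$, the inequality $\vp(x_t,u(x_t,t))\ge 0$ forces
\[
u(x_t,t)\le -\frac{\max_{\bar\Omega}\vp(\cdot,0)}{c_\vp}=:M,
\]
so $u\le M$ throughout $\bar\Omega\times[0,T]$ with $M=M(\vp,\Omega)$.

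For the lower bound, I would note that the compatibility hypothesis \eqref{int.11}, together with $w>0$, is precisely the statement $\dot u(\cdot,0)\ge 0$. If $\dot u(\cdot,0)\equiv 0$ then $u\equiv u_0$ and we are done; otherwise Lemma~\ref{selm.2} gives $\dot u>0$ for all $t>0$, and hence
\[
u(x,t)=u(x,0)+\int_0^t\dot u(x,\tau)\,d\tau\ \ge\ u(x,0)\ \ge\ \min_{\bar\Omega}u_0.
\]
Combining the two estimates yields $|u|\le C_0$ with $C_0=C_0(u_0,\vp)$, as claimed.

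Most of the argument is routine once the earlier pieces are in place, so I do not expect a real technical obstacle. The only point requiring care is the tacit use of preserved strict convexity from Remark~\ref{sermk.1}, which is what justifies moving the maximum to the boundary and then invoking $u_\nu\ge 0$ there; without this, one would have to run a maximum-principle/barrier argument directly on the parabolic equation with Neumann data, which is much more delicate. Granting the convexity, the proof reduces to packaging the three ingredients---boundary maximum, the sign of $\vp_z$, and the speed monotonicity---as above.
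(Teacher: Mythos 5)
Your proof is correct and takes essentially the same route as the paper: convexity pushes the spatial maximum to $\partial\Omega$, the Neumann condition together with $\vp_z\le c_\vp<0$ (the paper phrases this as $\vp(\cdot,z)\to-\infty$ as $z\to\infty$) gives the upper bound, and speed monotonicity from Lemma~\ref{selm.2} gives the lower bound via the same integral identity \eqref{ge.1}. The only difference is that you spell out the upper-bound inequality explicitly where the paper leaves it implicit; this is a useful elaboration but not a different argument.
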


\begin{theorem}[$C^1$ estimates]
For a convex solution $u$ of equation \eqref{int.1}, the gradient of $u$ remains bounded during the evolution,
\be\label{ge.3}
|Du|\leq C_1,
\ee
where $C_1=C_1(|u|_{C^0}, \Omega, \vp).$
\end{theorem}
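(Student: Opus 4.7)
My plan is to use convexity of $u$ (guaranteed by Remark~\ref{sermk.1}) to reduce the global gradient estimate to a boundary estimate, then split $Du$ on $\partial\Omega$ into its normal part (controlled by the Neumann datum) and its tangential part (controlled by strict convexity of $\Omega$ together with convexity of $u$).

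\emph{Step 1: Reduction to $\partial\Omega$.} By Remark~\ref{sermk.1} the surface $\Sigma(t)$ is strictly convex, and since $a_{ij}=\gamma^{ik}u_{kl}\gamma^{lj}/w$ with $[\gamma^{ij}]$ symmetric positive definite, this is equivalent to $D^2u(\cdot,t)>0$. For such convex $u$ and any unit direction $v$, $t\mapsto u_v(x+tv)$ is nondecreasing along any line segment in $\bar\Omega$, so $\max_{\bar\Omega} u_v=\max_{\partial\Omega} u_v$. Taking the supremum in $v$ yields
\begin{equation*}
\max_{\bar\Omega}|Du(\cdot,t)|=\max_{\partial\Omega}|Du(\cdot,t)|.
\end{equation*}

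\emph{Step 2: Boundary estimate.} Let $x_0\in\partial\Omega$ be a maximum point and decompose $Du(x_0)=u_\nu(x_0)\nu+D'u(x_0)$ with $D'u(x_0)$ tangential. The Neumann condition combined with \eqref{ge.2} yields
\begin{equation*}
|u_\nu(x_0)|=|\vp(x_0,u(x_0))|\leq C(|u|_{C^0},\vp).
\end{equation*}
For the tangential component, use strict convexity of $\Omega$: there exists $R=R(\Omega)>0$ so that $B_R(x_0-R\nu(x_0))\subset\Omega$ for every $x_0\in\partial\Omega$ (uniform interior ball condition). For any unit tangent vector $\tau$ at $x_0$, the point $y:=x_0-R\nu(x_0)+R\tau$ lies on $\partial B_R$, hence in $\bar\Omega$. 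Convexity of $u$ gives
\begin{equation*}
u(y)\geq u(x_0)+Du(x_0)\cdot(y-x_0)=u(x_0)-Ru_\nu(x_0)+Ru_\tau(x_0),
\end{equation*}
so $Ru_\tau(x_0)\leq 2|u|_{C^0}+R|u_\nu(x_0)|$. Running the same argument with $-\tau$ in place of $\tau$ bounds $|u_\tau(x_0)|$, and hence $|D'u(x_0)|$, by a constant depending only on $|u|_{C^0}$, $R(\Omega)$, and $\vp$. Combined with Step~1, this gives \eqref{ge.3}.

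\emph{Main obstacle.} The essential point is the reduction in Step~1: without convexity of $u$ one would instead apply a parabolic maximum principle to an auxiliary function such as $|Du|^2 e^{\alpha u}$, and the interior calculation, initial-time term and boundary term (obtained by differentiating $u_\nu=\vp$ tangentially and invoking the Hopf lemma) would all have to be handled separately. Here convexity, which is free from Lemma~\ref{selm.2}, turns the problem into a purely geometric inequality whose only quantitative input is the uniform inner-tangent-ball radius provided by strict convexity of $\Omega$.
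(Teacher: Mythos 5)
Your proof is correct and follows essentially the same route as the paper (which itself adapts Theorem 2.2 of Lions--Trudinger--Urbas): convexity of $u$ reduces the gradient bound to $\partial\Omega$, the normal derivative is read off from the Neumann condition, and the tangential derivative is controlled via the uniform interior ball provided by strict convexity of $\Omega$ together with the supporting-plane inequality. The only cosmetic difference is that the paper introduces the affine supporting function $\omega$ and evaluates it at the ball's center $z$ as an intermediate step, whereas you apply the supporting-plane inequality directly at $y=x_0-R\nu+R\tau$; the algebra is identical.
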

\begin{proof}
The proof is the same as Theorem 2.2 in \cite{LTU}, for readers convenience we include it here.
By the convexity of $u$ we have for any $t\in [0, T]$
\be\label{ge.4}
\max\limits_{\Omega}|Du(\cdot, t)|=\max\limits_{\partial\Omega}|Du(\cdot, t)|.
\ee
Let $x_0\in\partial\Omega$ and let $\tau$ be a direction such that $\nu\cdot\tau=0$ at $x_0.$
Let $B=B_R(z)$ be an interior ball at $x_0,$ L be the line through $x_0$ in the direction of $-\nu,$
and $L$ intersects $\partial B$ at $y_0.$ Then $z=\frac{1}{2}(x_0+y_0),$  we also let $y$ be the unique point such that 
$\frac{y-z}{|y-z|}=\tau.$

Now let $\omega$ be an affine function such that $\omega(x_0)=u(x_0, t)$ and $D\omega=Du(x_0, t).$ Then $\omega\leq u(x, t),\,\,x\in\Omega$ and
\be\label{ge.5}
\begin{aligned}
\omega(z)&=\omega(x_0)+D\omega(x_0)\cdot(z-x_0)\\
&=u(x_0, t)+Du(x_0, t)\cdot\frac{z-x_0}{|z-x_0|}\cdot|z-x_0|\\
&\geq u(x_0, t)-M_1R,
\end{aligned}
\ee
where we assume $\vp(x, u)\leq M_1$ in $\bar{\Omega}\times[-C_0, C_0].$
Therefore,
\be\label{ge.6}
D_\tau u(x_0, t)=D_\tau\omega(x_0)=\frac{\omega(y)-\omega(z)}{|y-z|}\leq\frac{u(y, t)-u(x_0, t)+M_1R}{R}\leq\frac{2C_0}{R}+M_1.
\ee
Since $\tau,$ $x_0,$ and $t$ are arbitrary, we are done.
\end{proof}
\bigskip

\section{$C^2$ estimates}
\label{ce}
\setcounter{equation}{0}
First of all, we will list some evolution equations that will be used later. Since the calculations
are straightforward, we will only state our results here.
\begin{lemma}
\label{celm.1}
Let $u$ be a solution to the general curvature flow \eqref{int.1}. Then we have the following evolution equations:\\
(i)$\frac{d}{dt}g_{ij}=-2(F-\Phi)h_{ij},$\\
(ii)$\frac{d}{dt}\n=-g^{ij}(F-\Phi)_i\tau_j,$\\
(iii)$\frac{d}{dt}\n^{n+1}=-g^{ij}(F-\Phi)_iu_j,$\\
(vi)$\frac{d}{dt}h^j_i=(F-\Phi)^j_i+(F-\Phi)h^k_ih^j_k$,\\
where $g_{ij}, h_{ij}$ are the first and second fundamental forms, $\n$ is the upward unit normal to $\Sigma(t),$ $\n^{n+1}=\lt<\n, e^{n+1}\rt>,$
and $h_i^j=g^{jk}h_{kj}.$ 
\end{lemma}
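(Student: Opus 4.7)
The flow equation \eqref{int.1} is equivalent to $\dot X = (F-\Phi)\n$, as noted in Section \ref{int} (the identity $\n^{n+1} = 1/w$ converts the scalar PDE $\dot u = w(F-\Phi)$ into the geometric version). Because $g_{ij}$, $\n$, and $h_i^j$ are intrinsic geometric quantities of the evolving hypersurface, their evolution equations are insensitive to tangential reparametrizations. I would therefore pass to a parametrization in which $\dot X = v\n$ with $v := F-\Phi$ and apply the standard Gauss--Weingarten calculus; since the author labels these calculations ``straightforward,'' I only indicate the main ingredient of each of (i)--(iv).

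For (i), differentiate $g_{ij} = \langle X_i, X_j\rangle$, observe that the $v_i \n$ and $v_j\n$ contributions vanish against $X_i$ and $X_j$, and invoke the Weingarten relation $\n_i = -h_i^k X_k$. For (ii), differentiating $|\n|^2 = 1$ yields $\dot{\n}\perp\n$, while differentiating $\langle \n, X_j\rangle = 0$ gives $\langle \dot{\n}, X_j\rangle = -v_j$; these two force $\dot{\n} = -g^{ij}v_i\tau_j$ with $\tau_j = X_j$. Formula (iii) is then immediate by pairing (ii) with $e^{n+1}$ and using $\langle X_j, e^{n+1}\rangle = u_j$, which holds since $X_j = (e_j, u_j)$.

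For (iv), I would first differentiate $h_{ij} = \langle \n, X_{ij}\rangle$, plugging in the formula for $\dot{\n}$ together with the identity $\langle \n, \n_{ij}\rangle = -h_i^k h_{kj}$; after absorbing Christoffel terms into covariant derivatives one obtains
\[\dot h_{ij} = \nabla_i\nabla_j v - v\, h_{ik}h^k_j.\]
Differentiating $g^{ik}g_{kj} = \delta^i_j$ and feeding in (i) yields $\dot g^{ij} = 2v\, h^{ij}$. Combining both via the product rule for $h_i^j = g^{jk}h_{ki}$ gives
\[\dot h^j_i = 2v\, h^j_l h^l_i + \nabla^j\nabla_i v - v\, h^j_l h^l_i = v^j_i + v\, h^k_i h^j_k,\]
which is (iv). The only genuine care required is sign bookkeeping (the choice of $\n$ as the upward normal fixes the sign of $h_{ij}$, and hence the sign in (i), which then propagates through (iv)); none of the individual steps is difficult, which is why the proof is omitted in the paper.
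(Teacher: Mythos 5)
Your derivation is correct and is exactly the standard Gauss--Weingarten calculation the author clearly has in mind; the paper itself omits the proof with the remark that ``the calculations are straightforward.'' One small imprecision worth flagging: the components $g_{ij}$, $h_i^j$, and the pointwise time derivative of the scalar $\n^{n+1}$ are \emph{not} literally invariant under tangential reparametrization (a tangential vector field contributes a Lie-derivative term), so the justification for passing from the graph flow $\dot u = w(F-\Phi)$ to the normal flow $\dot X = (F-\Phi)\n$ should rest on the fact that the two parametrizations trace out the same one-parameter family of hypersurfaces, with the stated formulas understood in the normal parametrization --- which is what you in fact compute in, so the substance of the argument is unaffected.
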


\subsection{$C^2$ interior estimates}
\label{ce1}
\setcounter{equation}{0}
In this subsection, we will prove the following theorem.
\begin{theorem}
\label{ce1th.1}
Let $\Sigma(t)=\{(x, u(x, t))| x\in \Omega, t\in[0, T]\}$ be the flow surfaces, where $u(x, t)$ satisfies equation \eqref{int.1} and 
\[\n^{n+1}\geq 2a>0\,\,\mbox{on $\Sigma(t), \forall t\in [0, T].$}\]
For $X\in \Sigma(t),$ let $\kp_{\max}(X)$ be the largest principle curvature of $\Sigma(t)$ at $X.$ Then
\be\label{ce1.1'}
\max\limits_{\bar{\Omega}_T}\frac{\kp_{\max}}{\n^{n+1}-a}\leq C_2(\Phi, |u|_{C^1})\lt(1+\max\limits_{\partial\Omega_T}\kp_{\max}\rt),
\ee
where $\Omega_T=\Omega\times(0, T].$
\end{theorem}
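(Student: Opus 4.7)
The plan is to apply the parabolic maximum principle to the logarithmic test function
\[
W(x,t) \;=\; \log h^1_1(x,t) \;-\; \log\bigl(\n^{n+1}(x,t) - a\bigr),
\]
where at each point we work in a local orthonormal tangent frame that diagonalizes the Weingarten operator, so $h^1_1 = \kp_{\max}$. If the maximum of $W$ over $\bar\Omega_T$ is attained on the parabolic boundary (at $t=0$ or on $\partial\Omega\times[0,T]$), the claimed inequality follows directly from the definition of $W$ and the bound $\n^{n+1}-a\geq a>0$. Otherwise the maximum is attained at an interior point $(x_0, t_0)\in\Omega\times(0,T]$, and the goal is to bound $h^1_1$ there by a constant depending only on $|u|_{C^1}$ and $\Phi$.

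At $(x_0, t_0)$, rotate coordinates so that $[h^j_i]$ is diagonal with $\kp_1\geq\cdots\geq\kp_n$; a standard perturbation argument allows $h^1_1$ to be differentiated when $\kp_1$ fails to be simple. Using Lemma \ref{celm.1}(iv) together with the Codazzi identity and the second variation of $F$, the evolution of $h^1_1$ reads
\[
\partial_t h^1_1 - F^{pq}\nabla_p\nabla_q h^1_1 \;=\; F^{pq,rs}(h_{pq})_1(h_{rs})_1 + (F-\Phi)(h^1_1)^2 - \Phi_{11} + \text{l.o.t.},
\]
with $F^{pq,rs}(h_{pq})_1(h_{rs})_1\leq 0$ by concavity \eqref{int.3}. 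Lemma \ref{celm.1}(iii) together with the standard formula for $(\n^{n+1})_{;pq}$ on a convex graph yields
\[
\partial_t \log(\n^{n+1}-a) - F^{pq}\nabla_p\nabla_q \log(\n^{n+1}-a) \;\geq\; \frac{\n^{n+1}}{\n^{n+1}-a}\sum_p F^{pp}\kp_p^2 + \text{l.o.t.}
\]
Subtracting, and using $\partial_t W\geq 0$ and $F^{pq}\nabla_p\nabla_q W\leq 0$ at the interior maximum, one arrives at a schematic inequality
\[
c\,\frac{\n^{n+1}}{\n^{n+1}-a}\sum_p F^{pp}\kp_p^2 \;\leq\; C\bigl(|u|_{C^1},\Phi\bigr)\bigl(1+h^1_1\bigr).
\]
Cauchy--Schwarz gives $\sum_p F^{pp}\kp_p^2 \geq F^2/\sum_p F^{pp}$, the identity $F=\sum_p F^{pp}\kp_p\geq\Phi>0$ follows from one-homogeneity \eqref{int.6}, and assumption \eqref{int.7} prevents $\sum_p F^{pp}$ from degenerating as $\kp_1\to\infty$. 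Since $\n^{n+1}-a\leq 1-a$, the left-hand side grows faster than linearly in $h^1_1$, forcing the desired uniform bound.

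The principal obstacle is handling the indefinite third-order terms involving $\nabla_p h_{11}$ and $\nabla_1 h_{pq}$ that arise from differentiating $\log h^1_1$ twice. At the critical point the gradient condition $(h^1_1)_p/h^1_1 = (\n^{n+1})_p/(\n^{n+1}-a)$ must be used to rewrite these, and the remaining indefinite cubic terms absorbed into the nonpositive concavity contribution $F^{pq,rs}(h_{pq})_1(h_{rs})_1$ via a carefully chosen Cauchy--Schwarz estimate, as in the Caffarelli--Nirenberg--Spruck framework. The delicate use of structure condition \eqref{int.7} to retain ellipticity as $\kp_1$ becomes large is the other subtle point in closing the argument.
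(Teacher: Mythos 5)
Your overall strategy---maximum principle applied to $\kappa_{\max}/(\n^{n+1}-a)$, diagonalizing at the critical point, using the gradient condition to kill the quadratic first-derivative terms, and discarding the $F^{ij,rs}$ term by concavity---matches the paper. The gap is in how you propose to close the argument.

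Your schematic evolution of $h^1_1$ records the term $(F-\Phi)(h^1_1)^2$ but relegates the Simons-type commutator contribution $F^{ii}(h_{ii}^2 h_{11} - h_{11}^2 h_{ii})$ to ``l.o.t.'' It is not lower order: by Euler's identity $\sum f_i\kappa_i = F$, this produces $-h_{11}^2 F + h_{11}\sum f_i\kappa_i^2$, and the $-h_{11}^2 F$ piece combines with $(F-\Phi)h_{11}^2$ to leave exactly $-\Phi\,\kappa_1^2$. This, divided by $\n^{n+1}-a$ and using $\Phi\geq\inf\Phi>0$ (hypothesis \eqref{int.8}), is the super-linear negative term that beats the $O(\kappa_1)$ error and forces $\kappa_1\leq C(\Phi,|u|_{C^1})$; see \eqref{ce1.6}--\eqref{ce1.8}. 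Nothing else in the argument produces super-linear decay.

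Your proposed alternative---Cauchy--Schwarz giving $\sum_p F^{pp}\kappa_p^2\geq F^2/\sum_p F^{pp}$, combined with \eqref{int.7}---cannot close for two reasons. First, by the speed estimate of Lemma~\ref{selm.1}, $F = \Phi + \dot u/w$ is uniformly bounded above, so $F^2/\sum_p F^{pp}$ is at best a bounded quantity, not one that grows with $\kappa_1$; in fact one only has $\sum f_i\kappa_i^2 \leq \kappa_1 F \leq C\kappa_1$, which is merely linear. Second, condition \eqref{int.7} concerns the growth of $f$ as a single argument $\lambda_n$ is pushed to infinity over a compact base $E$; it does not control $\sum f_i$ nor prevent $f_1\to 0$ as $\kappa_1\to\infty$ (which indeed happens for $\sigma_n^{1/n}$). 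In the paper, \eqref{int.7} plays no role in this interior estimate. So the cancellation $(F-\Phi)\kappa_1^2 - F\kappa_1^2 = -\Phi\kappa_1^2$ together with $\Phi>0$ is the essential mechanism, and it must be tracked explicitly rather than absorbed.
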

\begin{proof}
Let's consider 
\[M_0=\max\limits_{\bar{\Omega}_T}\frac{\kp_\text{max}}{\n^{n+1}-a},\]
we assume $M_0>0$ is attained at an interior point $(x_0, t_0)\in\Omega\times(0, T].$
We can choose a coordinate such that $\kp_1=\kp_\text{max},$  $h^j_i=\kappa_i\delta_{ij},$ and $g_{ij}=\delta_{ij}$ at $(x_0, t_0).$
In the following, $h_{ij},$ $h^j_i$ means the same.

At $(x_0, t_0),$ $\psi=\frac{h_{11}}{\n^{n+1}-a}$ achieves its local maximum. Hence at this point we have
\be\label{ce1.1}
\frac{h_{11i}}{h_{11}}-\frac{\nabla_i\n^{n+1}}{\n^{n+1}-a}=0.
\ee
Moreover, by Lemma \ref{celm.1}
\be\label{ce1.2}
\begin{aligned}
\frac{\partial}{\partial t}\psi&=\frac{\dot{h_{11}}}{\n^{n+1}-a}-\frac{h_{11}\dot{\n}^{n+1}}{(\n^{n+1}-a)^2}\\
&=\frac{1}{\n^{n+1}-a}\lt\{\nabla_{11}F-\nabla_{11}\Phi+(F-\Phi)\kp_1^2\rt\}
+\frac{h_{11}}{(\n^{n+1}-a)^2}(F-\Phi)_iu_i.
\end{aligned}
\ee
Since
\be\label{ce1.3}
\nabla_{11}\Phi=\Phi_{x_1x_1}(x, u)+2\Phi_zu_1+\Phi_zu_{11},
\ee
\be\label{ce1.4}
\nabla_{11}u=\lt<X, e_{n+1}\rt>_{11}=\lt<h_{11}\n, e_{n+1}\rt>=h_{11}\n^{n+1},
\ee
and
\be\label{ce1.5}
\begin{aligned}
\nabla_{11}F&=F^{ij}h_{ij11}+F^{ij,rs}h_{ij1}h_{rs1}\\
&=F^{ij}(h_{11ij}-h^2_{11}h_{ij}+h_{ik}h_{kj}h_{11})+F^{ij, rs}h_{ij1}h_{rs1}.
\end{aligned}
\ee
Combine \eqref{ce1.2}-\eqref{ce1.5} we get at $(x_0, t_0)$
\be\label{ce1.6}
\begin{aligned}
&\frac{\partial}{\partial t}\psi-F^{ii}\nabla_{ii}\psi\\
&=\frac{1}{\n^{n+1}-a}\lt\{F^{ii}h_{ii11}+F^{ij, rs}h_{ij1}h_{rs1}-\nabla_{11}\Phi+(F-\Phi)\kp_1^2\rt\}\\
&+\frac{h_{11}}{(\n^{n+1}-a)^2}(F-\Phi)_iu_i-\frac{F^{ii}h_{11ii}}{\n^{n+1}-a}+\frac{h_{11}}{(\n^{n+1}-a)^2}F^{ii}\n^{n+1}_{ii}\\
&=\frac{1}{\n^{n+1}-a}F^{ii}(h^2_{ii}h_{11}-h^2_{11}h_{ii})+\frac{F^{ij, rs}h_{ij1}h_{rs1}}{\n^{n+1}-a}\\
&-\frac{\nabla_{11}\Phi}{\n^{n+1}-a}+\frac{(F-\Phi)\kp_1^2}{\n^{n+1}-a}+\frac{h_{11}}{(\n^{n+1}-a)^2}(F-\Phi)_iu_i\\
&+\frac{h_{11}}{(\n^{n+1}-a)^2}F^{ii}\lt(-\nabla_kh_{ii}u_k-h^2_{ii}\n^{n+1}\rt)\\
&\leq\frac{-ah_{11}}{(\n^{n+1}-a)^2}f_i\kp_i^2-\frac{\Phi\kp_1^2}{\n^{n+1}-a}+\frac{F^{ij, rs}h_{ij1}h_{rs1}}{\n^{n+1}-a}\\
&+\frac{C}{\n^{n+1}-a}-\frac{\Phi_z\kp_1\n^{n+1}}{\n^{n+1}-a}-\frac{\kp_1}{(\n^{n+1}-a)^2}(\Phi_i+\Phi_zu_i)u_i,\\
\end{aligned}
\ee
which yields,
\be\label{ce1.7}
0\leq\frac{-a\kp_1}{(\n^{n+1}-a)^2}f_i\kp_i^2-\frac{\lt(\inf\limits_{\bar{\Omega}\times[-C_0, C_0]}\Phi\rt)\kp_1^2}{\n^{n+1}-a}+C\kp_1,
\ee
thus
\be\label{ce1.8}
{\kappa_1}\leq C=C(\Phi, |u|_{C^1}).\ee
Therefore we conclude that 
\be\label{ce1.9}
\max\limits_{\bar{\Omega}_T}\frac{\kp_{\text{max}}}{\n^{n+1}-a}\leq C_2\lt(1+\max\limits_{\partial\Omega_T}\kp_{\text{max}}\rt).
\ee
\end{proof}

\subsection{$C^2$ boundary estimates}
\label{ce2}
\setcounter{equation}{0}
We use $\nu$ for the outer unit normal of $\partial\Omega$ and $\tau$ for a direction that tangential to $\partial\Omega.$ 
By the exactly same argument as Lemma 4.1 of \cite{SS} we have
\begin{lemma}[Mixed $C^2$ estimates at the boundary]
\label{ce2lm.1'}
Let $u$ be the solution of our flow equation \eqref{int.1}. Then the absolute value  of $u_{\tau\nu}$ remains a priori bounded on 
$\partial\Omega$ during the evolution.
\end{lemma}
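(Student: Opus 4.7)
The plan is to extract the boundary value of $u_{\tau\nu}$ directly from tangential differentiation of the Neumann condition $u_\nu = \vp(x,u)$, bypassing any parabolic maximum principle argument. The key observation is that since the identity $u_\nu = \vp(x,u)$ holds pointwise on $\partial\Omega$ for every $t$, a tangential derivative stays on $\partial\Omega$ and produces an algebraic expression for $u_{\tau\nu}$ in terms of quantities already controlled by the $C^0$ and $C^1$ estimates of Section \ref{ge}.

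Concretely, I would fix an arbitrary boundary point $x_0 \in \partial\Omega$ and a unit tangent vector $\tau$ to $\partial\Omega$ at $x_0$, then extend $\tau$ and the outer normal $\nu$ smoothly to vector fields on $\partial\Omega$ in a neighborhood of $x_0$. Differentiating $\nu^i u_i = \vp(x,u)$ along $\tau$ at $x_0$ yields
\[
\tau^k \nu^i u_{ik} \;=\; \tau^k \vp_{x^k}(x_0,u) + \vp_z(x_0,u)\, u_\tau - \tau^k \nu^i_{,k}\, u_i ,
\]
whose right-hand side is uniformly bounded by the $C^0$ and $C^1$ estimates of Section \ref{ge} together with the smoothness of $\partial\Omega$ and $\vp$. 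Since $u_{\tau\nu}$ (viewed as a directional second derivative) differs from $\tau^k\nu^i u_{ik}$ only by first-derivative terms already controlled by $|u|_{C^1}$, this produces the desired pointwise bound at $(x_0,t)$. As $x_0$, $\tau$, and $t$ are arbitrary, the lemma follows.

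The main conceptual point is that the Neumann condition is itself an equation prescribing the normal derivative, so that $u_{\tau\nu}$ is effectively determined along $\partial\Omega$ up to $C^1$ data; no maximum-principle or barrier argument is needed. This is why the author can invoke the identical computation used in Lemma 4.1 of \cite{SS}, and why this estimate is separated from the much harder pure-tangential and pure-normal boundary $C^2$ bounds that must come in the rest of Section \ref{ce2}; the only real obstacle here is careful bookkeeping of the extensions of $\nu$ and $\tau$, which is routine given the smoothness of $\partial\Omega$.
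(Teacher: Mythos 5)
Your proposal is correct and is essentially the same argument as Lemma 4.1 of Schn\"urer--Smoczyk, which the paper invokes: one simply differentiates the Neumann condition $\nu^i u_i = \vp(x,u)$ along a tangential direction $\tau$ and reads off $\tau^k\nu^i u_{ik}$ in terms of $\vp_{x^k}$, $\vp_z u_\tau$, and $\tau^k \nu^i_{,k}u_i$, all of which are controlled by the $C^0$ and $C^1$ estimates and the smoothness of $\partial\Omega$. The only cosmetic remark is that $\tau^k\nu^i u_{ik}$ already \emph{is} $u_{\tau\nu}$ in the sense used here, so the final ``differs by first-derivative terms'' sentence is unnecessary (it matters only if one defines $u_{\tau\nu}$ via iterated directional derivatives of vector-field extensions).
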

Now we consider the function
\be\label{ce2.1}
V(x, \xi, t):=u_{\xi\xi}-2(\xi\cdot\nu)\xi_i'(D_i\vp-D_kuD_i\nu^k),
\ee
where $\xi'=\xi-(\xi\cdot\nu)\nu.$
By Theorem \ref{ce1th.1}, we may assume $V(x, \xi, t)$ achieves its maximum at $(x_0, t_0)\in\partial\Omega\times(0, T],$ otherwise, we are done.

We will devide it into 3 cases.

(i). \textbf{ $\xi$ is tangential.} Computing the second tangential derivatives of the boundary condition we obtain
\be\label{ce2.2}
D_ku\delta_i\delta_j\nu^k+\delta_i\nu^k\delta_jD_ku+\delta_j\nu^k\delta_iD_ku+\nu^k\delta_i\delta_jD_ku=\delta_i\delta_j\vp,
\ee
where $\delta_i=(\delta_{ij}-\nu^i\nu^j)D_i.$
Therefore at $(x_0, t_0)$ we have
\be\label{ce2.3}
\begin{aligned}
D_{\xi\xi\nu}u&=\nu^k\xi_i\xi_jD_{ijk}u\\
&\leq -2(\delta_i\nu^k)D_{jk}u\xi_i\xi_j+(\delta_i\nu^j)\xi_i\xi_jD_{\nu\nu}u+\vp_zD_{ij}u\xi_i\xi_j+C.
\end{aligned}
\ee

Next since $V$ attains its maximum at $(x_0, t_0)$ we have
\be\label{ce2.4}
0\leq D_\nu V=u_{\xi\xi\nu}-a_kD_{k\nu}u-(D_\nu a_k)D_ku-D_\nu b,
\ee
where $a_k=2(\xi\cdot\nu)(\vp_z\xi'_k-\xi'_iD_i\nu^k)$ and $b=2(\xi\cdot\nu)\xi'_k\vp_{x_k}.$
Thus, using Lemma \ref{ce2lm.1'}
\be\label{ce2.5}
u_{\xi\xi\nu}\geq a_\nu D_{\nu\nu}u-C=-C,
\ee
combine with \eqref{ce2.3} yields
\be\label{ce2.6}
-2(\delta_i\nu^k)D_{jk}u\xi_i\xi_j+(\delta_i\nu^j)\xi_i\xi_ju_{\nu\nu}-c_{\varphi}D_{ij}u\xi_i\xi_j+C\geq-C.
\ee
Therefore we have
\be\label{ce2.7}
D_{\xi\xi}u(x_0, t_0)\leq C(1+D_{\nu\nu}u(x_0, t_0)).
\ee

(ii)\textbf{$\xi$ is non-tangential.} We write $\xi=\alpha\tau+\beta\nu,$ where $\alpha=\xi\cdot\tau,$ $\beta=\xi\cdot\nu\neq 0.$
Then
\be\label{ce2.8}
\begin{aligned}
D_{\xi\xi}u&=\alpha^2D^2_{\tau\tau}u+\beta^2D_{\nu\nu}u+2\alpha\beta D_{\tau\nu}u\\
&=\alpha^2D_{\tau\tau}u+\beta^2D_{\nu\nu}u+V'(x, \xi),\\
\end{aligned}
\ee
where $V'=2(\xi\cdot\nu)\xi'_i(D_i\vp-D_kuD_i\nu^k).$
Thus we get,
\be\label{ce2.9}
\begin{aligned}
V(x_0, \xi, t_0)&=\alpha^2 V(x_0, \tau, t_0)+\beta^2 V(x_0, \nu, t_0)\\
&\leq\alpha^2 V(x_0, \xi, t_0)+\beta^2 V(x_0, \nu, t_0),\\
\end{aligned}
\ee
which yeilds
\be\label{ce2.10}
u_{\xi\xi}(x_0, t_0)\leq C(1+u_{\nu\nu}(x_0, t_0)).
\ee

(iii)\textbf{Double normal $C^2$-estimates at the boundary.} Let's recall our evolution equation
\be\label{ce2.11}
\lt\{\begin{aligned}
\dot{u}&=w\lt[F\lt(\frac{\gamma^{ik}u_{kl}\gamma^{lj}}{w}\rt)-\Phi(x, u)\rt]\\
u_\nu&=\vp(x, u)
\end{aligned}\rt.
\ee
 In the following we denote
\[G(D^2u, Du)=F\lt(\frac{\gamma^{ik}u_{kl}\gamma^{lj}}{w}\rt),\] then we have
\be\label{ce2.15}
G^{ij}:=\frac{\partial G}{\partial u_{ij}}=\frac{1}{w}F^{kl}\gamma^{ik}\gamma^{lj},
\ee
\be\label{ce2.16}
G^s:=\frac{\partial G}{\partial u_s}=-\frac{u_s}{w^2}F-\frac{2}{w(1+w)}F^{ij}a_{ik}(wu_k\gamma^{sj}+u_j\gamma^{ks}).
\ee
By the positivity of $[a_{ij}],$ it's easy to see that
\be\label{ce2.17}
\sum|G^i|\leq CF\leq \tilde{C}_0.
\ee

Now, let $q(x)=-d(x)+Nd^2(x),$ then $q\in C^\infty$ in $\Omega_\mu$ for some constant $\mu\leq\tilde{\mu}$ small depending on $\Omega,$
and $N\mu\leq\frac{1}{8}.$
Since 
\[-Dd(y_0)=\nu(x_0)\]
where $x_0\in\partial\Omega$ and $\text{dist}(y_0, \partial\Omega)=\text{dist}(x_0, y_0),$
$q$ satisfies the following properties in $\Omega_\mu:$
\be\label{ce2.18}
-\mu+N\mu^2\leq q\leq 0;\,\, \frac{1}{2}\leq|Dq|\leq 2.
\ee
It's also easy to see that $\frac{Dq}{|Dq|}=\nu$ for unit outer normal $\nu$ on the boundary.

Next,  let
\be\label{ce2.19}
M=\max\limits_{\partial\Omega\times[0, T]}u_{\nu\nu}
\ee
and $Q(x, t)=Q(x)=(A+\frac{1}{2}M)q(x)$ in $\Omega_\mu,$
where $\mu, A, N$ are positive constant to be chosen later.
We consider the following function
\be\label{ce2.20}
P(x, t):=Du\cdot Dq-\vp-Q
\ee
\begin{lemma}
\label{ce2lm.1}
For any $(x, t)\in\bar{\Omega}_\mu\times[0, T],$ if we choose $A, N$ large, $\mu$ small, then we have $P(x, t)\geq 0.$ 
\end{lemma}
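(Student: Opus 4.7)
The plan is to apply the parabolic minimum principle to $P$ on the cylinder $\bar\Omega_\mu \times [0, T]$, whose parabolic boundary consists of $\partial\Omega \times [0, T]$, $\{d = \mu\} \times [0, T]$, and $\bar\Omega_\mu \times \{0\}$. I will first verify $P \geq 0$ on the parabolic boundary, then rule out a negative interior minimum by a linearized computation.

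On $\partial\Omega$, where $d = 0$, we have $Dq = \nu$ and $Q = 0$, so the Neumann condition forces $P \equiv 0$. On the inner face $\{d = \mu\}$, the constraint $N\mu \leq 1/8$ gives $-q \geq 7\mu/8$, hence $-Q \geq (A + M/2)(7\mu/8)$, which dominates the uniformly bounded quantity $|Du\cdot Dq - \varphi|$ (controlled by $|u|_{C^1}$ and $\varphi$) once $A$ is taken large. At $t = 0$, the compatibility condition \eqref{int.10} together with $Dq|_{\partial\Omega} = \nu$ forces $Du_0\cdot Dq - \varphi(x, u_0)$ to vanish on $\partial\Omega$ and hence to be $O(d)$ on $\Omega_\mu$ by Taylor expansion; since $-Q \geq (A + M/2)(7d/8)$, the same choice of $A$ handles this slice.

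For the interior, I would compute the linearized operator $\mathcal{L} := \partial_t - \tilde G^{ij} D_{ij} - \tilde G^s D_s$ applied to $P$, where $\tilde G := w(G - \Phi)$ so that $\dot u = \tilde G(D^2u, Du, u, x)$. Differentiating the flow in $x_k$ yields $\mathcal{L} u_k = \tilde G_z u_k + \tilde G_{x_k}$, which is bounded by $C^1$ data. Using $q_{ij} = (-1 + 2Nd)d_{ij} + 2N d_i d_j$ and collecting, $\mathcal{L} P$ decomposes into a bounded zeroth-order piece, a second-order cross term $-2\tilde G^{ij} u_{ki} q_{kj}$, and the driving contribution from $-\mathcal{L} Q$, which supplies the key positive quantity $2N(A + M/2)\,\tilde G^{ij} d_i d_j$.

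The main obstacle is controlling the cross term, which is not bounded a priori. At a putative interior negative minimum of $P$, the first-order condition $DP = 0$ gives the identity
\[
(-1 + 2Nd)\, u_{kj} d_k = -u_k q_{kj} + \varphi_{x_j} + \varphi_z u_j + (A + M/2) q_j,
\]
so that $|u_{kj} d_k| = O(A + M/2)$ in terms of $C^1$ data. Substituting into the rank-one part $-4N\tilde G^{ij}(u_{ki} d_k) d_j$ of the cross term, and using homogeneity plus concavity of $f$ (which give $\sum F^{ii} \geq 1$, hence a positive lower bound on $\operatorname{tr}(\tilde G^{ij})$, and allow Cauchy--Schwarz absorption of the bounded-curvature part of $q_{ij}$ into the positive definite form $\tilde G^{ij} d_i d_j$), one shows that for $N$ chosen large and then $A$ taken large (keeping $N\mu \leq 1/8$), all negative contributions are strictly dominated by $2N(A + M/2)\,\tilde G^{ij} d_i d_j$. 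Thus $\mathcal{L} P < 0$ at any interior negative minimum, a contradiction; therefore $P \geq 0$ throughout $\bar\Omega_\mu \times [0, T]$. The subtle point is arranging the hierarchy $N \gg 1 \gg \mu$, then $A \gg 1$, so that the absorption step closes while the boundary bounds remain intact independently of $M$.
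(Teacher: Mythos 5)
Your treatment of the parabolic boundary is fine and essentially matches the paper (on $\partial\Omega$ the Neumann condition forces $P=0$; on $\{d=\mu\}$ and at $t=0$ a large $A$ makes $-Q$ dominate the bounded part), but the interior argument has a genuine gap, and the mechanism you propose is not the one that actually closes the estimate.

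You identify the ``key positive quantity'' as $2N(A+M/2)\,\tilde G^{ij}d_id_j$, and you propose to control the rank-one part $-4N\tilde G^{ij}(u_{ki}d_k)d_j$ of the cross term by substituting the first-order condition $DP=0$. But carry that substitution out: since $q_k=(-1+2Nd)d_k$ and $Q_j=(A+M/2)(-1+2Nd)d_j$, the condition $P_j=0$ gives
\[
u_{kj}d_k \;=\; (A+M/2)\,d_j \;+\; \frac{\varphi_j-u_kq_{kj}}{-1+2Nd},
\]
where the remainder is $O(N)$ in $C^1$ data. Plugging this into $-4N\tilde G^{ij}(u_{ki}d_k)d_j$ produces a leading piece $-4N(A+M/2)\,\tilde G^{ij}d_id_j$, which is \emph{negative}, has the \emph{same} scaling in $N$ and $A+M/2$ as your claimed positive term, and is in fact \emph{twice} its magnitude. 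The net rank-one contribution is therefore $-2N(A+M/2)\,\tilde G^{ij}d_id_j$, which cannot be absorbed by making $N$ or $A$ large (both appear with the same powers on both sides), and there is in addition an $O(N^2)\,\tilde G^{ij}d_id_j$ error of indeterminate sign. There is also a conceptual problem with electing $\tilde G^{ij}d_id_j$ as the positive driver: this quadratic form has no uniform lower bound at the minimum point --- indeed, the very identity you derive forces $\tilde G^{ij}d_id_j$ to be small when $A+M/2$ is large, unless the cross term is already controlled by other means.

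The paper closes the estimate by a different pair of ideas that you do not invoke. First, the cross term $2G^{ij}u_{li}q_{lj}$ is bounded \emph{directly} by using homogeneity of $f$ (Euler's relation $F^{kl}a_{kl}=F$, i.e.\ $G^{ij}u_{ij}=F$ is a priori bounded), together with $0\le q_{ij}\le(k_1+2N)\delta_{ij}$ and the convexity $u_{ii}\ge 0$; this yields a bound $\le \tilde C_3(k_1+2N)$, independent of $A$ and $M$. Second, the positive term $(A+M/2)\,G^{ij}q_{ij}$ is split: one half is estimated from below via the \emph{all-directions} lower bound $q_{ij}\ge k_0\delta_{ij}$ (true because $-d_{ij}\ge 2k_0\delta$ tangentially and $2Nd_id_j$ supplies the normal direction), combined with $\sum G^{ii}\ge c>0$ from concavity and $f(1,\dots,1)=1$; the other half is estimated below by $G(D^2q,Du)$ via concavity, and is then made arbitrarily large using the structural hypothesis \eqref{int.7} (this is Lemma~2.2 of \cite{GS}). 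Your outline never uses \eqref{int.7}, never uses $G^{ij}u_{ij}=F$, and tries instead to absorb into $\tilde G^{ij}d_id_j$ --- but this form is degenerate in the tangential directions, so the ``Cauchy--Schwarz absorption of the bounded-curvature part of $q_{ij}$'' you describe is backwards: it is the $-d_{ij}$ piece, not the rank-one piece, that provides positivity tangentially. As written, the interior step does not go through.
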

\begin{proof}
First, let's assume $P(x, t)$ attains its minimum at $(x_0, t_0)\in\Omega_\mu\times(0, T]$ and $u_{ij}(x_0, t_0)=u_{ii}(x_0, t_0)\delta_{ij}.$
Differentiating $P$ we get
\be\label{ce2.21}
P_i=\sum_lu_{li}q_i+\sum_lu_lq_{li}-\vp_i-Q_i,
\ee
\be\label{ce2.22}
P_{ij}=\sum_lu_{lij}q_l+2\sum_lu_{li}q_{lj}+\sum_lu_lq_{lij}-\vp_{ij}-Q_{ij},
\ee
and
\be\label{ce2.23}
\begin{aligned}
P_t&=Du_t\cdot Dq-\vp-Q\\
&=[w(F-\Phi)]_lq_l-\vp_zu_t=[w(F-\Phi)]_lq_l-\vp_zw(F-\Phi).\\
\end{aligned}
\ee
Therefore at $(x_0, t_0)$ we have
\be\label{ce2.24}
\begin{aligned}
&\frac{1}{w}P_t-G^{ij}P_{ij}\\
&=\frac{1}{w}[w(F-\Phi)]_lq_l-\vp_z(F-\Phi)-G^{ij}(\sum_lu_{lij}q_l+2\sum_lu_{li}q_{lj}\\
&+\sum_lu_lq_{lij}-\vp_{ij})+(A+\frac{1}{2}M)G^{ij}q_{ij}\\
&=\frac{1}{w}[w(F-\Phi)]_lq_l-\vp_z(F-\Phi)-G^{ii}\sum_lu_{lii}q_l\\
&-2G^{ii}u_{ii}q_{ii}-G^{ii}u_lq_{lii}+G^{ii}\vp_{ii}+(A+\frac{1}{2}M)G^{ii}q_{ii}.
\end{aligned}
\ee
This implies at $(x_0, t_0)$
\be\label{ce2.25}
\begin{aligned}
0&\geq\frac{1}{w}P_t-G^{ii}P_{ii}\\
&=\frac{(F-\Phi)}{w}\cdot\frac{u_lu_{ll}q_l}{w}+F_lq_l-\Phi_lq_l-\vp_z(F-\Phi)\\
&-G^{ii}\sum_lu_{lii}q_l-2G^{ii}u_{ii}q_{ii}-\sum_lG^{ii}u_lq_{lii}+G^{ii}(\vp_{x_ix_i}+2\vp_{x_iz}u_i+\vp_zu_{ii})\\
&+(A+\frac{1}{2}M)G^{ii}q_{ii}.\\
\end{aligned}
\ee
Since $G(D^2u, Du)=F$ we have
\be\label{ce2.26}
G^{ij}u_{ijl}+G^su_{sl}=F_l,
\ee
which gives us
\be\label{ce2.27}
F_lq_l-G^{ij}u_{ijl}q_l=G^su_{sl}q_l.
\ee
By \eqref{ce2.17} we have
\be\label{ce2.28}
|G^su_{sl}q_l|=|G^lu_{ll}q_l|\leq\tC_1(M+1).
\ee
Moreover, by the speed estimate \eqref{se.1} and the gradient estimate \eqref{ge.3} it's easy to see
\be\label{ce2.29}
|\Phi_lq_l|+\lt|\frac{F-\Phi}{w}\cdot\frac{u_lu_{ll}q_l}{w}+\vp_zG^{ii}u_{ii}\rt|\leq \tC_2 M.
\ee
Now, by the convexity of $\partial\Omega,$ we can assume
\be\label{ce2.30}
2k_0\delta_{\alpha\beta}\leq-d_{\alpha\beta}\leq k_1\delta_{\alpha\beta},\,\,1\leq\alpha, \beta\leq n-1.
\ee
Thus in $\Omega_\mu$ we have
\be\label{ce2.31}
(k_1+2N)\delta_{ij}\geq q_{ij}=-d_{ij}+2Ndd_{ij}+2Nd_id_j\geq k_0\delta_{ij},
\ee
where $1\leq i, j\leq n.$
We get
\be\label{ce2.32}
|2G^{ii}u_{ii}q_{ii}|\leq\tC_3(k_1+2N).
\ee
Since 
\be\label{ce2.33}
q_{ijl}=-d_{ijl}+2Nd_ld_{ij}+2Ndd_{ijl}+4Nd_{il}d_j,
\ee
we get
\be\label{ce2.34}
|q_{ijl}|\leq C(|\partial\Omega|_{C^3})+6Nk_1.
\ee
Therefore
\be\label{ce2.35}
|G^{ii}u_lq_{lii}|\leq\lt(C(|\partial\Omega|_{C^3})+6Nk_1\rt)C_1\sum G^{ii},\\
\ee
consequently we have
\be\label{ce2.36}
|G^{ii}u_lq_{lii}+G^{ii}(\vp_{x_ix_i}+2\vp_{x_iz}u_i)|\leq (\tC_4+6\tC_5Nk_1)\sum G^{ii}.
\ee
To conclude we obtained
\be\label{ce2.37}
\begin{aligned}
0&\geq\frac{1}{w}P_t-G^{ii}P_{ii}\\
&\geq-\tC_2M-\tC_1(M+1)-\tC_3(k_1+2N)-(\tC_4+6\tC_5 Nk_1)\sum G^{ii}\\
&+(A/2+1/4M)k_0\sum G^{ii}+(A/2+1/4M)G(D^2 q, Du),
\end{aligned}
\ee
here we used the concavity of $f,$ which gives us $G^{ij}(D^2u,Du)q_{ij}\geq G(D^2 q, Du).$
By Lemma 2.2 of \cite{GS}, we may choose $N$ sufficiently large such that
\be\label{ce2.38}
\frac{1}{4}G(D^2 q, Du)\geq 2\tC_1+\tC_2,
\ee
then we choose $A$ such that
\be\label{ce2.39}
\frac{k_0}{2}A>\tC_3(k_1+2N)+\tC_4+6N\tC_5k_1.
\ee
Substitute \eqref{ce2.38} and \eqref{ce2.39} to \eqref{ce2.37} we get
\be\label{ce2.40}
\frac{1}{w}P_t-G^{ij}P_{ij}>0
\ee
at $(x_0, t_0),$ leads to a contradiction.

Finally, since for any $(x, t)\in\partial\Omega\cap\Omega_\mu\times[0, T]$ we have
\[P(x, t)=0.\]
For $(x, t)\in\partial\Omega_\mu\setminus\partial\Omega\times[0, T]$ we have
\[P(x, t)\geq-\tC_6+(A+\frac{1}{2}M)\cdot \frac{1}{2}\mu>0,\]
when $A\geq \frac{2\tC_6}{\mu}.$
Moreover, when $A\geq \tC_7=\tC_7(|u_0|_{C^2}, |\vp|_{C^1}),$ we have for $x\in \Omega_\mu$
\[P(x, 0)\geq 0.\]
Thus, choose 
\[A=\frac{2[\tC_3(k_1+2N)+\tC_4+6N\tC_5k_1]}{k_0}+\frac{2\tC_6}{\mu}+\tC_7\]
we have $P(x, t)\geq 0$ in $\Omega_\mu\times [0, T].$
\end{proof}

\begin{theorem}
\label{ce2th.1}
Let $\Omega$ be a smooth bounded, strictly convex domain in $\R^n,$ u is a smooth solution of \eqref{int.1}, $\nu$ is the outer unit normal vector of 
$\partial\Omega.$ Then we have
\be\label{ce2.41}
\max\limits_{\partial\Omega\times[0, T]}u_{\nu\nu}\leq C.
\ee
\end{theorem}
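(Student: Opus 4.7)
The plan is to combine the barrier $P$ from Lemma \ref{ce2lm.1} with a Hopf-type boundary derivative argument. Let $M = \max_{\partial\Omega\times[0,T]} u_{\nu\nu}$ and let $(x_0,t_0)\in\partial\Omega\times[0,T]$ be a point where the maximum is attained; the goal is to show $M \leq C$.

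First I would verify that $P$ vanishes identically on $\partial\Omega\times[0,T]$. On $\partial\Omega$ we have $d=0$, hence $q=0$, so the term $Q=(A+\tfrac{1}{2}M)q$ drops out; moreover $Dq=-Dd=\nu$ on $\partial\Omega$, so $Du\cdot Dq=u_\nu=\vp$, and $P\equiv 0$ there. Combined with Lemma \ref{ce2lm.1}, which gives $P\geq 0$ in $\Omega_\mu\times[0,T]$, this means $P$ attains its minimum along $\partial\Omega\times[0,T]$, so the Hopf-type one-sided inequality
\[
P_\nu(x,t)\leq 0\qquad\mbox{for all }(x,t)\in\partial\Omega\times[0,T]
\]
must hold.

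Next I would differentiate $P$ in the outer normal direction at $(x_0,t_0)$. Direct computation yields
\[
P_\nu=\sum_l u_{l\nu}q_l+\sum_l u_l q_{l\nu}-D_\nu\vp(x,u)-\lt(A+\tfrac{1}{2}M\rt)q_\nu.
\]
At boundary points $q_l=\nu^l$, so $\sum_l u_{l\nu}q_l=u_{\nu\nu}(x_0,t_0)=M$, and $q_\nu=Dq\cdot\nu=1$. The remaining contributions $\sum_l u_l q_{l\nu}$, $\vp_{x_i}\nu^i+\vp_z\vp$, and the second derivatives of $d$ are all controlled by $|u|_{C^1}$, $|\vp|_{C^1}$, and $|\partial\Omega|_{C^3}$, which are known to be bounded. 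Substituting into $P_\nu\leq 0$ gives
\[
M-\lt(A+\tfrac{1}{2}M\rt)-C\leq 0,
\]
so $M\leq 2A+2C$. Since $A$ was fixed in Lemma \ref{ce2lm.1} independently of $M$, this closes the argument and, together with cases (i) and (ii) above, completes the boundary $C^2$ estimate.

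The main obstacle has already been surmounted inside the proof of Lemma \ref{ce2lm.1}: the barrier had to carry the unknown quantity $M$ linearly inside $Q$, and only the concavity of $F$ together with the lower bound on $G(D^2q,Du)$ (invoked via \cite{GS}) allowed $A$ and $N$ to be chosen uniformly in $M$. The step proposed here is the comparatively routine Hopf computation, whose only subtle point is the identity $Dq\cdot\nu=1$ on $\partial\Omega$, which makes the coefficient of $M$ in $P_\nu$ strictly less than $1$ and so leaves room to solve for $M$.
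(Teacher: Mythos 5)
Your argument is correct and is essentially identical to the paper's own proof: the paper likewise evaluates $P_\nu\le 0$ at the boundary maximum point of $u_{\nu\nu}$, uses $Dq\cdot\nu=1$ and $q_l=\nu^l$ on $\partial\Omega$ to isolate $u_{\nu\nu}-(A+\tfrac12 M)$ up to controlled terms, and solves $M\le C+\tfrac12 M$. Your preliminary check that $P\equiv 0$ on $\partial\Omega\times[0,T]$ and your remark that the factor $\tfrac12$ in $Q$ is what makes the inequality solvable match the paper's reasoning exactly.
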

\begin{proof}
Assume $(z_0, t_0)\in\partial\Omega\times[0, T]$ is the maximum point of $u_{\nu\nu}$ on $\partial\Omega\times[0, T].$ By Lemma \ref{ce2lm.1} we have
\be\label{ce2.42}
\begin{aligned}
0&\geq P_\nu(z_0, t_0)=(\sum_lu_{l\nu}q_l+u_lq_{l\nu}-\vp_\nu)-(A+\frac{1}{2}M)q_\nu\\
&\geq u_{\nu\nu}-C(|u|_{C^1}, N, |\partial\Omega|_{C^2}, |\vp|_{C^1})-(A+\frac{1}{2}M),\\
\end{aligned}
\ee
Therefore we have,
\be\label{ce2.43}
\max\limits_{\partial\Omega\times[0, T]}u_{\nu\nu}\leq C+\frac{1}{2}M,
\ee
which implies \eqref{ce2.41}.
\end{proof}
\bigskip

\section{Convergence to a stationary solution}
\label{cvg}
\setcounter{equation}{0}
Let us go back to our original problem \eqref{int.1}, which is a scalar parabolic differential equation defined on the cylinder
$\Omega_T=\Omega\times[0, T]$ with initial value $u_0.$ In view of a priori estimates, which we have estimated in the preceding sections, we know that
\be\label{cvg.1}
|D^2 u|\leq C,
\ee
\be\label{cvg.2}
|Du|\leq C,
\ee
and 
\be\label{cvg.3}
|u|\leq C.
\ee
Therefore, 
\[\mbox{$F$ is uniformly elliptic.}\]
Moreover, since $F$ is concave, we have uniform $C^{2+\alpha}(\Omega)$ estimates for $u(\cdot, t), \forall t\in[0, T].$ We can repeat the process and conclude that the flow exists for all $t\in[0, \infty).$

By integrating the flow equation with respect to $t$ we get
\be\label{cvg.4}
u(x, t^*)-u(x, 0)=\int_0^{t^*}w(F-\Phi)dt.
\ee
In particular, by \eqref{cvg.3} we have
\be\label{cvg.5}
\int_0^{\infty}w(F-\Phi)dt<\infty\,\,\forall x\in\Omega.
\ee
Hence for any $x\in\Omega$ there existes a sequence $t_k\goto\infty$ such that $F-\Phi\goto 0.$
On the other hand, $u(x, \cdot)$ is monotone increasing and bounded. Therefore,
\be\label{cvg.6}
\lim\limits_{t\goto\infty}u(x, t)=u^\infty(x)
\ee
exists, and is of class $C^\infty(\bar{\Omega})$. Moreover, $u^\infty$ is a stationary solution of our problem, i.e., 
$f(\kp[\Sigma^\infty])=\Phi(x, u^\infty)$ and $u^\infty_\nu=\phi(x, \infty).$
\bigskip

\end{document}